
\documentclass[preprint,12pt]{elsarticle}





\usepackage{graphicx,amsmath,amssymb,mathrsfs,amsfonts,amsthm,color,slashbox,amsbsy,ragged2e}
\usepackage{hyperref}

 \usepackage{lineno}



\usepackage[figuresright]{rotating}


\newcommand{\bq}{\begin{equation}}
\newcommand{\eq}{\end{equation}}
\newcommand{\bqs}{\begin{equation*}}
\newcommand{\eqs}{\end{equation*}}
\newcommand{\bqa}{\begin{eqnarray}}
\newcommand{\eqa}{\end{eqnarray}}
\newcommand{\bqas}{\begin{eqnarray*}}
\newcommand{\eqas}{\end{eqnarray*}}
\newcommand{\bc}{\begin{cases}}
\newcommand{\ec}{\end{cases}}
\newcommand{\bt}{\begin{thm}}
\newcommand{\et}{\end{thm}}
\newtheorem{theorem}{Theorem}[section]
\newtheorem{lemma}[theorem]{Lemma}

\newtheorem{corollary}[theorem]{Corollary}

\newtheorem{definition}[theorem]{Definition}

\begin{document}
\begin{frontmatter}


\title{Stochastic comparisons of series and parallel systems with heterogeneous  components}
\author[a]{Esmaeil Bashkar}
\author[a]{\corref{cor1}Hamzeh Torabi}
\ead{htorabi@yazd.ac.ir}
\author[b]{Majid Asadi}
\cortext[cor1]{Corresponding author at: Department of Statistics, Yazd University, Yazd, Iran.}
\address[a]{Department of Statistics, Yazd University, Yazd, Iran}
\address[b]{Department of Statistics, University of Isfahan, Isfahan, Iran}


%

\begin{abstract}
In this paper, we discuss stochastic comparisons of parallel  systems with independent  heterogeneous exponentiated Nadarajah-Haghighi (ENH)
components in terms of the usual stochastic order,
dispersive order, convex transform order and the likelihood ratio order. In the presence of the Archimedean copula, we study stochastic comparison of series dependent systems in terms of the usual stochastic order. Due to the great flexibility of the failure rate function of the ENH, it thus provides a good alternative to many existing life distributions in modeling
positive real data sets in practice. In particular, it can be an interesting alternative to the well-known three-parameter
exponentiated Weibull (EW) distribution.
\end{abstract}

\begin{keyword}
Exponentiated Nadarajah-Haghighi distribution  \sep Stochastic ordering  \sep Majorization \sep Parallel system \sep Series system.



\end{keyword}
\end{frontmatter}


\section{Introduction}

Let $ X_{1:n}\leq \ldots \leq X_{n:n} $ denote the order statistics arising from random variables $ X_{1}, \ldots , X_{n} $. Order statistics play a prominent rule  in the reliability theory,  life testing, operations research and other related areas. In reliability theory, the $ k $th order statistic coresponds to the lifetime of a $ (n-k+1) $-out-of-$ n $ system. In particular, $  X_{1:n} $ and $X_{n:n} $ correspond to the lifetimes of series and parallel systems, respectively. Various researchers have studied
the stochastic comparisons for the lifetimes of the series and parallel systems. 
For example \cite{k}, \cite{fz13}, \cite{to15}, \cite{ll}, \cite{tk15} and \cite{fb15} deal with the case of heterogeneous Weibull distributions, \cite{fz15} and \cite{kc16} deal with the case of heterogeneous exponentiated Weibull (EW) distributions, \cite{bee} deals with the case of heterogeneous  generalized exponential (GE) distributions and \cite{gup} deals with the  case of heterogeneous Fr$\grave{\rm e}$chet distributions. A recent review on the topic can be also found in \cite{bz}. 

A new generalization of the exponential distribution as an alternative to the gamma, Weibull and GE distributions was proposed by Nadarajah and Haghighi \cite{nh}. Its cumulative function is given by
\begin{equation}\label{nh1}
F(x) = 1-\exp\{ 1-(1+\lambda x)^{\alpha} \},\quad x>0,
\end{equation}
where $ \lambda>0 $ is the scale parameter, and $ \alpha>0 $ is the shape parameter. Lemonte \cite{lem} proposed a new three-parameter generalization of the exponential distribution on the basis of the
distribution proposed by Nadarajah and Haghighi \cite{nh}. The new family of distributions is rather simple and is constructed by raising the cumulative function given in Eq. \eqref{nh1} to
an arbitrary power, $ \beta>0 $ say. The new cumulative function is given by
\begin{equation}\label{l1}
G(x) = [1-\exp\{ 1-(1+\lambda x)^{\alpha} \}]^{\beta},\quad x>0,
\end{equation}

where the parameters $ \alpha>0 $ and $ \beta>0 $ control the shapes of the distribution, and the parameter $ \lambda>0 $ is the scale
parameter. We shall refer to the new distribution given in \eqref{l1} as the exponentiated NH (ENH) distribution. If a random
variable $ X $ has the ENH distribution, then we write $ X \thicksim {\rm ENH}(\alpha, \lambda, \beta ) $. Clearly, if $ \beta = 1 $, the ENH distribution reduces to
the NH distribution. For $ \alpha = 1 $, we obtain the GE distribution proposed by Gupta and Kundu \cite{gk}. We have the exponential distribution when $ \alpha = \beta =1 $.

Similarly to the exponentiated Weibull model, the ENH failure rate function can have the following four forms depending on its shape
parameters: (i) increasing; (ii) decreasing; (iii) unimodal (upside-down bathtub); (iv) bathtub-shaped. Therefore, it can be used quite effectively in analyzing lifetime data. Additionally, the new ENH model can be used as an alternative to the
EW distribution constructed by Mudholkar and Srivastava \cite{ms}. In Section 3, we discuss stochastic comparisons of parallel systems with independent  heterogeneous ENH components in terms of the usual stochastic order,
dispersive order, convex transform order and the likelihood ratio order. In the presence of the Archimedean copula, we study stochastic comparison of series dependent systems in terms of the usual stochastic order. To continue our discussion, we need definitions
of some stochastic orders and the concept of majorization which is given in Section 2 of the paper.

\section{The basic definitions and some prerequisites}\label{2}
In this section, we recall some notions of stochastic orders, majorization and related orders and some useful lemmas, which are helpful for proving our main results. Throughout this paper, we use the notations $ \Bbb R = (-\infty,+\infty) $ and $ \Bbb R_{++} = (0,+\infty) $ and
 the term increasing means nondecreasing and decreasing
means nonincreasing.

Let $  X $ and $ Y $ be two non-negative random variables with distribution functions $ F $ and $ G $, density functions $ f $ and $ g $, the survival functions $ \bar{F}=1 - F  $ and $ \bar{G}=1 - G  $, the right continuous inverses\footnote{The right continuous inverse of an increasing function $ h $ is defined as $ h^{-1}(u)=\sup\{x\in \Bbb R : h(x)\leq u\} $} $ F^{-1} $ and $ G^{-1} $ of $ F $ and $ G $, and hazard rate functions $ h_{F}=\frac{f}{\bar{F}} $ and $ h_{G}=\frac{g}{\bar{G}} $,  respectively.


The following definition introduces some well-known orders that compare skewness
of probability distributions.
\begin{definition}
{\rm $ X $ is said to be smaller than $ Y $ in the 
\begin{itemize} 
\item[{\rm(i) }] convex transform order denoted by $ X\leq_{\rm c}Y $ if $ G^{-1}F(x) $ is convex in $ x \geq0 $;
\item[{\rm(ii)}]  Lorenz order denoted by $ X\leq_{\rm Lorenz}Y $ if
\begin{equation}
\frac{1}{E(X)}\int_0^{F^{-1}(u)}x dx \geq \frac{1}{E(Y)}\int_0^{G^{-1}(u)}x dx, \quad \forall u \in (0,1].
\end{equation}
\end{itemize}}
\end{definition}
The convex transform order implies the Lorenz order which,
in turn, implies the order between the corresponding the coefficients of
variations.

The following definition gives some well-known orders that compare the
dispersion of two random variables.  
\begin{definition}
{\rm $ X $ is said to be smaller than $ Y $ in the 
\begin{itemize}

\item[{\rm(i) }] dispersive order, denoted by $ X\leq_{\rm disp}Y $, if $  F^{-1}(\beta)-F^{-1}(\alpha)\leq G^{-1}(\beta)-G^{-1}(\alpha)$ for all $ 0<\alpha\leq\beta<1 $,
\item[{\rm(ii)}] right-spread order, denoted by $ X\leq_{\rm RS}Y $, if 
\begin{equation}
\int_{F^{-1}(u)}^{\infty} \bar{F}(x) dx \leq \int_{G^{-1}(u)}^{\infty} \bar{G}(x) dx, \quad \forall u \in (0,1).
\end{equation}
\end{itemize}}
\end{definition}
It is well-known that the dispersive order implies the right-spread order which,
in turn, implies the order between the corresponding variances.

The next definition introduces some well-known orders that compare the magnitude
of two random variables.
\begin{definition}
{\rm $ X $ is said to be smaller than $ Y $ in the 
\begin{itemize}
\item[\rm (i)] stochastic order, denoted by $ X \leq_{\rm st} Y $, if $ \bar{F}(x)\leq\bar{G}(x) $ for all  $ x $;
\item[{\rm(ii)}] likelihood ratio order, denoted by $ X\leq_{\rm lr}Y $, if $ g(x)/f (x) $ is increasing in $ x\in \Bbb R_{++} $;
\item[{\rm(iii)}] hazard rate order, denoted by $ X\leq_{\rm hr}Y $, if $ h_{F} (x) \geq h_{G}(x) $ for all $ x $.
\end{itemize}}
\end{definition}
Note that the likelihood ratio order implies the hazard rate order, and the hazard
rate order implies the usual stochastic order. Moreover, for non-negative random
variables, the dispersive order implies the usual stochastic order. For a comprehensive
discussion on various stochastic orderings, one may refer to \cite{ss} and \cite{lll}.

A real function $ \phi $ is $n$-monotone on $ (a, b) \subseteq (-\infty,+\infty) $ if $ (-1)^{n-2}\phi^{(n-2)} $ is
decreasing and convex in $ (a, b) $ and $ (-1)^{k}\phi^{(k)}(x)\geq0 $ for all $ x \in (a, b), k =
0, 1, \ldots , n - 2 $, in which $\phi^{(i)}(.)$ is the $i$th derivative of $\phi(.)$.
For a $n$-monotone $ (n \geq 2) $ function $ \phi : [0,+\infty) \longrightarrow [0, 1] $ with $ \phi(0) = 1 $ and $ \lim_{x\rightarrow+\infty} \phi(x) = 0 $, let $ \psi = \phi ^{-1} $ be the
pseudo-inverse, then
\begin{equation*}
C_{\phi}(u_1, \ldots , u_n) = \phi (\psi(u_1) + \ldots + \psi(u_n)), \quad \text{for all}\, u_{i} \in [0, 1], i = 1, \ldots , n,
\end{equation*}
is called an Archimedean copula with the generator $ \phi $. Archimedean copulas
cover a wide range of dependence structures including the independence copula with the
generator $ \phi(t)=e^{-t} $. For more on Archimedean copulas, readers may refer to \cite{nels}
and \cite{ncn}.

The concepts of majorization of vectors and Schur convexity of functions
will also be needed.
For some extensive and comprehensive discussions on the
theory of these orders and their applications, one can see \cite{met}. Let us recall that the notation $ x_{(1)}\leq x_{(2)}\leq ...\leq x_{(n)}$  is used to denote the increasing arrangement of the components of the vector $ \boldsymbol{x} = (x_{1}, \ldots , x_{n})$.

\begin{definition}
{\rm The vector $ \boldsymbol{x} $ is said to be
\begin{itemize}
\item[ (i)] weakly submajorized by the vector $ \boldsymbol{y} $ (denoted by $ \boldsymbol{x}\preceq_{\rm w}\boldsymbol{y} $) if
$\sum_{i=j}^{n}x_{(i)}\leq \sum_{i=j}^{n}y_{(i)}$ for all $j = 1, \ldots , n $,

\item[ (ii)] weakly supermajorized by the vector $ \boldsymbol{y} $ (denoted by $ \boldsymbol{x}\mathop \preceq \limits^{{\mathop{\rm w}} }\boldsymbol{y} $) if $ \sum_{i=1}^{j}x_{(i)}\geq \sum_{i=1}^{j}y_{(i)} $ for all $ j = 1, \ldots , n $,

\item[ (iii)] majorized by the vector $ \boldsymbol{y} $ (denoted by $ \boldsymbol{x}\mathop \preceq \limits^{{\mathop{\rm m}} }\boldsymbol{y} $) if $ \sum_{i=1}^{n}x_{i}= \sum_{i=1}^{n}y_{i}$ and $\sum_{i=1}^{j}x_{(i)}\geq \sum_{i=1}^{j}y_{(i)}$ for all $j = 1, \ldots , n-1 $.
\end{itemize}}
\end{definition}
\begin{definition}
{\rm A real valued function $ \varphi $ defined on a set $ \mathscr{A}\subseteq {\Bbb R}^{n} $ is said to be Schur-convex (Schur-concave) on $ \mathscr{A} $ if

\[
\boldsymbol{x} \mathop \preceq \limits^{{\mathop{\rm m}} }\boldsymbol{y} \quad \text{on}\quad \mathscr{A} \Longrightarrow \varphi(\boldsymbol{x})\leq (\geq)\varphi(\boldsymbol{y}).
\]}
\end{definition}

\begin{lemma}[\cite{met}, Theorem 3.A.8]\label{mkl}
{\rm For a function $ l $ on $ \mathscr{A}\in \Bbb R^{n} $, $ \boldsymbol{x}\preceq_{\rm w}(\mathop \preceq \limits^{{\mathop{\rm w}} })\boldsymbol{y} $ implies $ l(\boldsymbol{x}) \leq l(\boldsymbol{y}) $ if and only if
it is increasing (decreasing) and Schur-convex on $ \mathscr{A} $.}
\end{lemma}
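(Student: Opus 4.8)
The plan is to prove the weak-submajorization half of the statement in full and to obtain the weak-supermajorization half by a completely dual argument (replacing ``sum of the $k$ largest coordinates'' with ``sum of the $k$ smallest coordinates'' and ``increasing'' with ``decreasing''), so I would write out only the first case and remark that the second is symmetric. Since the claim is an equivalence, I would split it into two implications: (a) if $l$ is increasing and Schur-convex then $\boldsymbol{x}\preceq_{\rm w}\boldsymbol{y}$ forces $l(\boldsymbol{x})\leq l(\boldsymbol{y})$; and (b) conversely, if this preservation property holds then $l$ is necessarily increasing and Schur-convex.

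Direction (b) is the easy one, and I would extract the two conclusions from two special instances of weak submajorization. First, if $\boldsymbol{x}\preceq^{\rm m}\boldsymbol{y}$ in the sense of ordinary majorization, then because the total sums agree, $\sum_{i=j}^{n}x_{(i)}=\sum_{i=1}^{n}x_i-\sum_{i=1}^{j-1}x_{(i)}\leq\sum_{i=1}^{n}y_i-\sum_{i=1}^{j-1}y_{(i)}=\sum_{i=j}^{n}y_{(i)}$, so $\boldsymbol{x}\preceq_{\rm w}\boldsymbol{y}$; invoking the hypothesis gives $l(\boldsymbol{x})\leq l(\boldsymbol{y})$, which is exactly Schur-convexity. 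Second, if $\boldsymbol{x}\leq\boldsymbol{y}$ coordinatewise, then for every $k$ the sum of the $k$ largest coordinates of $\boldsymbol{x}$, written as $\max_{|S|=k}\sum_{i\in S}x_i$, is bounded by the same quantity for $\boldsymbol{y}$; hence $\boldsymbol{x}\preceq_{\rm w}\boldsymbol{y}$, and the hypothesis yields $l(\boldsymbol{x})\leq l(\boldsymbol{y})$, i.e.\ $l$ is increasing. This disposes of (b).

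Direction (a) is the substantive one, and the key device is an interpolation lemma: whenever $\boldsymbol{x}\preceq_{\rm w}\boldsymbol{y}$ there exists a vector $\boldsymbol{u}$ with $\boldsymbol{x}\leq\boldsymbol{u}$ coordinatewise and $\boldsymbol{u}\preceq^{\rm m}\boldsymbol{y}$. Granting this, (a) is immediate: monotonicity gives $l(\boldsymbol{x})\leq l(\boldsymbol{u})$ and Schur-convexity gives $l(\boldsymbol{u})\leq l(\boldsymbol{y})$, whence $l(\boldsymbol{x})\leq l(\boldsymbol{y})$. To build $\boldsymbol{u}$ I would sort coordinates in decreasing order so that weak submajorization reads $\sum_{i=1}^{k}x_i\leq\sum_{i=1}^{k}y_i$ for all $k$, and then absorb the total deficit $\sum_{i=1}^{n}y_i-\sum_{i=1}^{n}x_i\geq0$ by raising the coordinates of $\boldsymbol{x}$ from the bottom in a ``water-filling'' fashion: lift the smallest coordinates up toward the next level, so that each $u_i\geq x_i$, the decreasing order is preserved, the total is driven up exactly to $\sum_{i=1}^{n}y_i$, and no partial sum of the largest coordinates of $\boldsymbol{u}$ overshoots the corresponding partial sum of $\boldsymbol{y}$; that last property is precisely $\boldsymbol{u}\preceq^{\rm m}\boldsymbol{y}$.

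I expect the main obstacle to be verifying that this raising procedure maintains all of the required inequalities at once, namely componentwise dominance over $\boldsymbol{x}$, the decreasing-order constraint, and the running bound $\sum_{i=1}^{k}u_i\leq\sum_{i=1}^{k}y_i$ against $\boldsymbol{y}$; naively dumping the deficit into a single coordinate destroys the last of these, so the filling must be done delicately. The cleanest route is probably an induction on the number of coordinates, or a careful tracking of the nonnegative partial surpluses $\sum_{i=1}^{k}(y_i-x_i)\geq0$ showing that the deficit can always be absorbed without violating any majorization inequality. Establishing this interpolation lemma rigorously is the heart of the matter, while the remaining steps are routine bookkeeping, and the supermajorization statement follows by the evident dual construction, producing $\boldsymbol{u}$ with $\boldsymbol{x}\geq\boldsymbol{u}$ and $\boldsymbol{u}\preceq^{\rm m}\boldsymbol{y}$ together with the use of monotone decrease in place of monotone increase.
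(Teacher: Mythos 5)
The paper never proves this lemma: it is imported verbatim, with citation, as Theorem 3.A.8 of Marshall, Olkin and Arnold \cite{met}, so there is no in-paper argument to compare you against. What you have reconstructed blind is in fact the canonical proof of the cited theorem: Marshall et al.\ deduce 3.A.8 from precisely your interpolation lemma (their 5.A.9 and its dual 5.A.9.a), namely that $\boldsymbol{x}\preceq_{\rm w}\boldsymbol{y}$ if and only if there exists $\boldsymbol{u}$ with $\boldsymbol{x}\leq\boldsymbol{u}$ and $\boldsymbol{u}\mathop \preceq \limits^{{\mathop{\rm m}} }\boldsymbol{y}$, and dually for $\mathop \preceq \limits^{{\mathop{\rm w}} }$. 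Your direction (b) is complete and correct as written (majorization implies weak submajorization via the total-sum identity, and coordinatewise dominance implies weak submajorization via the $\max_{|S|=k}$ characterization of top-$k$ sums).

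The only real gap is the one you flag yourself, and it does close along exactly the water-filling line you propose. With coordinates of $\boldsymbol{x},\boldsymbol{y}$ sorted decreasingly, choose $t$ with $\sum_{i}\max(x_i,t)=\sum_i y_i$ (if the totals already agree, weak submajorization is majorization and $\boldsymbol{u}=\boldsymbol{x}$ works), and set $u_i=\max(x_i,t)$; then $\boldsymbol{u}\geq\boldsymbol{x}$, $\boldsymbol{u}$ is decreasing, and the totals match. Let $m$ be the number of indices with $x_i>t$. For $k\leq m$ the $k$th partial sum of $\boldsymbol{u}$ equals that of $\boldsymbol{x}$, hence is at most that of $\boldsymbol{y}$. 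For $k\geq m$, since $u_j=t$ for $j>k$ and the totals agree, $\sum_{i\leq k}y_i-\sum_{i\leq k}u_i=\sum_{j>k}(t-y_j)$, and this is nonnegative by a two-case check: if $y_{k+1}\leq t$ every summand is nonnegative; if $y_{k+1}>t$ then $y_j>t$ for all $m<j\leq k$, and the case $k=m$ bound $(n-m)t\geq\sum_{j>m}y_j$ (which follows from the total-sum identity together with $\sum_{i\leq m}x_i\leq\sum_{i\leq m}y_i$) yields $(n-k)t\geq\sum_{j>k}y_j+\sum_{m<j\leq k}(y_j-t)\geq\sum_{j>k}y_j$. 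So your sketch is not merely plausible; it is correct, and the dual construction $u_i=\min(x_i,t)$ handles the supermajorization half. One caveat worth recording: on a general $\mathscr{A}\subseteq\Bbb R^n$ (the paper's ``$\mathscr{A}\in\Bbb R^n$'' is a typo for $\subseteq$) direction (a) needs the interpolant $\boldsymbol{u}$ to lie in $\mathscr{A}$, and direction (b) needs $\mathscr{A}$ to contain the comparable pairs used; Marshall et al.\ state the theorem on all of $\Bbb R^n$, and the lemma should be read with that proviso.
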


\section{Main results}
Firstly, we introduce the following lemma, which will be needed to prove our main results.
\begin{lemma}\label{mlee}
{\rm Let the function $ g : (1, \infty)\longrightarrow (0, \infty) $ be defined as
\[
g(x)=\frac{x e^{1-x}}{1-e^{1-x}}.
\]
Then $  g(x) $ is a decreasing function on $ (1, \infty) $.}
\end{lemma}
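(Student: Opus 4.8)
The plan is to show that $g'(x)<0$ for every $x\in(1,\infty)$, which is equivalent to the claimed monotonicity. First I would differentiate $g$ by the quotient rule. Writing the numerator as $p(x)=xe^{1-x}$ and the denominator as $q(x)=1-e^{1-x}$, one computes $p'(x)=(1-x)e^{1-x}$ and $q'(x)=e^{1-x}$, so that
\begin{equation*}
g'(x)=\frac{(1-x)e^{1-x}\,(1-e^{1-x})-xe^{1-x}\cdot e^{1-x}}{(1-e^{1-x})^{2}}.
\end{equation*}

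The key step is to factor the common positive term $e^{1-x}$ out of the numerator and simplify what remains. Since $e^{1-x}>0$ and $(1-e^{1-x})^{2}>0$ on $(1,\infty)$, the sign of $g'(x)$ agrees with the sign of
\begin{equation*}
(1-x)(1-e^{1-x})-xe^{1-x}=(1-x)-e^{1-x}\bigl[(1-x)+x\bigr]=1-x-e^{1-x}.
\end{equation*}
This cancellation is the crux of the argument: the two terms carrying the factor $xe^{1-x}$ collapse, leaving the compact expression $1-x-e^{1-x}$.

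It then remains to verify that $1-x-e^{1-x}<0$ for $x>1$. Setting $s=1-x$ (so $s<0$), this reads $s-e^{s}<0$, which follows from the elementary inequality $e^{s}>s$ valid for every real $s$. One may justify $e^{s}>s$ by observing that $e^{s}-s$ has derivative $e^{s}-1$, which vanishes only at $s=0$ and changes sign from negative to positive there, so $e^{s}-s$ attains its global minimum $1$ at $s=0$ and hence is at least $1$ everywhere. Consequently the numerator is strictly negative, giving $g'(x)<0$ on $(1,\infty)$, so $g$ is strictly decreasing, as claimed. I do not anticipate any genuine obstacle beyond careful bookkeeping in the derivative step; the only thing to watch is the sign handling when factoring out $e^{1-x}$, after which the final inequality is entirely standard.
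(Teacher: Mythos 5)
Your proposal is correct and follows essentially the same route as the paper: both differentiate by the quotient rule and reduce the sign of the numerator, after factoring out $e^{1-x}$, to the inequality $1-x-e^{1-x}<0$ on $(1,\infty)$. The only cosmetic difference is that you justify this via the global inequality $e^{s}>s$, whereas for $x>1$ it is immediate since $1-x<0<e^{1-x}$, which is all the paper uses.
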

\begin{proof}
It is easy to see that
\[
g^{\prime}(x)=\dfrac{f(x)}{(1-e^{1-x})^{2}}
\]
where $ f(x)=e^{1-x}-e^{2(1-x)}-xe^{1-x} =e^{1-x}(1-x-e^{1-x})<0 $ for $ x>1 $.  Hence $ g(x) $ is a decreasing function on $ (1, \infty) $. 
\end{proof}
\subsection{Mutually independent samples}
In this section, we provide some comparison results on the lifetimes of parallel systems arising from independent heterogeneous
ENH random variables. The following result considers the comparison on the lifetimes of parallel systems in terms
of the usual stochastic order with respect to the shape parameter $ \alpha $.
\begin{theorem}\label{e1}
{\rm Let $ X_{1}, \ldots , X_{n} $ ($ X^{*}_{1}, \ldots , X^{*}_{n} $) be independent random variables with $ X_{i} \thicksim {\rm ENH}(\alpha_{i}, \lambda, \beta )$ ($ X^{*}_{i} \thicksim {\rm ENH}(\alpha^{*}_{i}, \lambda, \beta )$), $i = 1, \ldots, n $. Then, for any $ \lambda, \beta>0 $, we
have
\[
 (\alpha_{1}, \ldots , \alpha_{n})\mathop \succeq \limits^{{\mathop{\rm w}} }(\alpha^{*}_{1}, \ldots , \alpha^{*}_{n})  \Longrightarrow 
 X_{n:n}\geq_{\rm st}X^{*}_{n:n}.
\]
}
\end{theorem}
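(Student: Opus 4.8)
The plan is to reduce the stochastic comparison to a Schur-concavity statement about the distribution function of the parallel system and then to invoke Lemma \ref{mkl}. Writing $ t = 1 + \lambda x $, observe that $ t > 1 $ for every $ x > 0 $, and that the distribution function of $ X_{n:n} $ is
\[
\Phi(\alpha_1, \ldots, \alpha_n) = \prod_{i=1}^{n}\bigl[1 - \exp\{1 - t^{\alpha_i}\}\bigr]^{\beta}.
\]
Since $ X_{n:n} \geq_{\rm st} X^{*}_{n:n} $ is equivalent to $ \Phi(\boldsymbol{\alpha}) \leq \Phi(\boldsymbol{\alpha}^{*}) $ for all $ x > 0 $, it suffices to prove that, for each fixed $ t > 1 $, the map $ \boldsymbol{\alpha} \mapsto \Phi(\boldsymbol{\alpha}) $ is increasing in each argument and Schur-concave on $ \Bbb R_{++}^{n} $.

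First I would pass to the logarithm, $ \log \Phi = \beta \sum_{i=1}^{n} \log[1 - \exp\{1 - t^{\alpha_i}\}] =: \sum_{i=1}^{n} k(\alpha_i) $, which is legitimate because $ \exp $ and multiplication by $ \beta > 0 $ are increasing and hence preserve both monotonicity and Schur-concavity of the composite. A direct differentiation then gives
\[
k'(\alpha) = \beta\,(\ln t)\, \frac{t^{\alpha} \exp\{1 - t^{\alpha}\}}{1 - \exp\{1 - t^{\alpha}\}} = \beta\,(\ln t)\, g(t^{\alpha}),
\]
where $ g $ is exactly the function of Lemma \ref{mlee} and the argument $ t^{\alpha} $ lies in $ (1, \infty) $. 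Because $ g(u) > 0 $ on $ (1, \infty) $ and $ \ln t > 0 $, each $ k $ is increasing, so $ \Phi $ is increasing; and because $ g $ is decreasing by Lemma \ref{mlee} while $ \alpha \mapsto t^{\alpha} $ is increasing, $ k' $ is decreasing, i.e. each $ k $ is concave. A symmetric separable sum $ \sum_i k(\alpha_i) $ is Schur-concave precisely when $ k $ is concave (a standard majorization fact, see \cite{met}), so $ \Phi $ is Schur-concave.

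Finally I would feed these two properties into Lemma \ref{mkl}. The hypothesis $ (\alpha_1, \ldots, \alpha_n) \mathop \succeq \limits^{{\mathop{\rm w}} }(\alpha^{*}_1, \ldots, \alpha^{*}_n) $ means $ \boldsymbol{\alpha}^{*} \mathop \preceq \limits^{{\mathop{\rm w}} } \boldsymbol{\alpha} $ in the weak supermajorization order, so applying the supermajorization case of Lemma \ref{mkl} to $ -\Phi $ (which is decreasing and Schur-convex, being the negative of an increasing Schur-concave function) yields $ -\Phi(\boldsymbol{\alpha}^{*}) \leq -\Phi(\boldsymbol{\alpha}) $, that is $ \Phi(\boldsymbol{\alpha}) \leq \Phi(\boldsymbol{\alpha}^{*}) $, which is the desired $ X_{n:n} \geq_{\rm st} X^{*}_{n:n} $. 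The delicate point is the bookkeeping of directions: weak supermajorization does not preserve the sum of the coordinates, so monotonicity of $ \Phi $ is genuinely needed in addition to Schur-concavity, and one must verify that the sign flip converts the ``increasing $+$ Schur-concave'' pair into the ``decreasing $+$ Schur-convex'' pair demanded by the lemma. The only real computation is recognizing that $ k' $ collapses to $ g(t^{\alpha}) $, after which Lemma \ref{mlee} supplies all the analytic content.
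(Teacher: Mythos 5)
Your proof is correct and follows essentially the same route as the paper: both reduce the claim, via Lemma \ref{mkl}, to showing that the distribution function of $X_{n:n}$ is increasing and Schur-concave in $\boldsymbol{\alpha}$, with all the analytic content supplied by Lemma \ref{mlee} through the composite $g(t^{\alpha})$, $t=1+\lambda x$. The only difference is one of packaging: you verify Schur-concavity by taking logarithms and using concavity of the separable summand $k$, whereas the paper applies the Schur--Ostrowski criterion (Theorem 3.A.4 of \cite{met}) directly to the product --- an equivalent computation, since $k'$ decreasing is exactly the pairwise sign condition $(\alpha_i-\alpha_j)\bigl(\partial_i G - \partial_j G\bigr)\leq 0$ that the paper checks.
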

\begin{proof}
{\rm The distribution function of $ X_{n:n} $ can be written as
\begin{equation*}
G_{X_{n:n}}(x)= \prod_{i=1}^{n} [1-e^{1-(1+\lambda x)^{\alpha_i}}]^{\beta}.
\end{equation*}
Using Lemma \ref{mkl}, it is enough to show that the function $ G_{X_{n:n}}(x) $ is Schur-concave and increasing in $ \alpha_{i} $'s. The partial derivatives of $ G_{X_{n:n}}(x) $ with respect to $ \alpha_{i} $ is given by
\[
\frac{\partial G_{X_{n:n}}(x)}{\partial \alpha_{i}}=\frac{\beta \log(1+\lambda x)(1+\lambda x)^{\alpha_i}e^{1-(1+\lambda x)^{\alpha_i}}}{1-e^{1-(1+\lambda x)^{\alpha_i}}}G_{X_{n:n}}(x)\geq 0,
\]
So, we have that $ G_{X_{n:n}}(x) $ is increasing in each $ \alpha_{i} $.

To prove its Schur-concavity, it follows from Theorem 3.A.4. in \cite{met} that we
have to show that for $ i\neq j $,
\[
(\alpha_{i}-\alpha_{j})\bigg(\frac{\partial G_{X_{n:n}}(x)}{\partial \alpha_{i}}-\frac{\partial G_{X_{n:n}}(x)}{\partial \alpha_{j}} \bigg)\leq 0,
\]
that is, for $ i\neq j $,
\[
(\alpha_{i}-\alpha_{j}) G_{X_{n:n}}(x)\beta \log(1+\lambda x)\times
\]
\begin{equation}\label{sq32}
\bigg(\frac{(1+\lambda x)^{\alpha_i}e^{1-(1+\lambda x)^{\alpha_i}}}{1-e^{1-(1+\lambda x)^{\alpha_i}}}-\frac{(1+\lambda x)^{\alpha_j}e^{1-(1+\lambda x)^{\alpha_j}}}{1-e^{1-(1+\lambda x)^{\alpha_j}}}\bigg)\leq 0.
\end{equation}
It is obvious that $ (1+\lambda x)^{\alpha} $ is increasing in $ \alpha $. So, from Lemma \ref{mlee}, we conclude that the composite function $ \frac{(1+\lambda x)^{\alpha}e^{1-(1+\lambda x)^{\alpha}}}{1-e^{1-(1+\lambda x)^{\alpha}}} $ is decreasing in $ \alpha $, from which it follows
that \eqref{sq32} holds. This completes the proof of the required result.}
\end{proof}
We now generalize Theorem \ref{e1} to a wider range of the shape parameters
as follows.
\begin{theorem}\label{e11}
{\rm Let $ X_{1}, \ldots , X_{n} $ ($ X^{*}_{1}, \ldots , X^{*}_{n} $) be independent random variables with $ X_{i} \thicksim {\rm ENH}(\alpha_{i}, \lambda, \beta )$ ($ X^{*}_{i} \thicksim {\rm ENH}(\alpha^{*}_{i}, \lambda, \beta )$), $i = 1, \ldots, n $. Then, for any $ \lambda, \beta>0 $, if $ (\alpha_{1}, \ldots , \alpha_{n})\leq(\alpha^{*}_{1}, \ldots , \alpha^{*}_{n}) $, that is, $ \alpha_{i}\leq \alpha^{*}_{i} $, $ i = 1, \ldots, n  $, we have $  X_{n:n}\geq_{\rm st}X^{*}_{n:n} $.
}
\end{theorem}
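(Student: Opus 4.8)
The plan is to observe that the substantive work has already been carried out inside the proof of Theorem \ref{e1}. There it was shown that the distribution function of the parallel system, written as $G_{X_{n:n}}(x) = \prod_{i=1}^{n} [1-e^{1-(1+\lambda x)^{\alpha_i}}]^{\beta}$, has nonnegative partial derivative $\partial G_{X_{n:n}}(x)/\partial \alpha_i$ for every $x\geq 0$; that is, $G_{X_{n:n}}(x)$ is increasing in each shape parameter $\alpha_i$. This coordinatewise monotonicity is the only ingredient needed here, and the Schur-concavity half of that earlier argument (together with Lemma \ref{mlee}) plays no role at all.

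First I would fix $x\geq 0$ and invoke this monotonicity under the hypothesis $\alpha_i\leq\alpha^{*}_i$ for all $i$. Raising the arguments one coordinate at a time from $(\alpha_1,\ldots,\alpha_n)$ to $(\alpha^{*}_1,\ldots,\alpha^{*}_n)$ can only increase the product, so $G_{X_{n:n}}(x)\leq G_{X^{*}_{n:n}}(x)$ for every $x\geq 0$. Passing to survival functions gives $\bar{G}_{X_{n:n}}(x)\geq \bar{G}_{X^{*}_{n:n}}(x)$ for all $x$, which is precisely the defining inequality of the usual stochastic order in Definition~(i), and hence $X_{n:n}\geq_{\rm st}X^{*}_{n:n}$. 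As an equivalent sanity check I would note the componentwise route: since $(1+\lambda x)^{\alpha}$ is increasing in $\alpha$ for $x\geq 0$, each marginal law $G_i$ is increasing in $\alpha_i$, so $\alpha_i\leq\alpha^{*}_i$ forces $X_i\geq_{\rm st}X^{*}_i$, and the conclusion then follows from the closure of $\leq_{\rm st}$ under the formation of maxima of independent random variables.

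The only point demanding genuine care is the direction of the order: increasing $\alpha$ inflates the distribution function and therefore makes the component, and thus the maximum, stochastically \emph{smaller}, which is exactly why the hypothesis $\alpha_i\leq\alpha^{*}_i$ delivers $X_{n:n}\geq_{\rm st}X^{*}_{n:n}$ and not its reverse. Beyond bookkeeping of this sign, there is no real obstacle; unlike Theorem \ref{e1}, no majorization machinery is required, and this is precisely the sense in which the plain componentwise hypothesis reaches parameter configurations not covered by the weak supermajorization condition.
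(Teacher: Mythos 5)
Your proof is correct and takes essentially the same approach as the paper: the paper's own proof consists precisely of invoking the fact, established in the proof of Theorem \ref{e1}, that $G_{X_{n:n}}(x)$ is increasing in each $\alpha_{i}$, and then applying the definition of the usual stochastic order, exactly as you do (with the sign of the implication handled correctly). Your secondary route via $X_{i}\geq_{\rm st}X^{*}_{i}$ and the closure of $\leq_{\rm st}$ under maxima of independent random variables is a valid equivalent justification but adds nothing beyond the paper's argument.
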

\begin{proof}
Using the definition of the usual stochastic order and the
fact that $ G_{X_{n:n}}(x) $ is increasing in each $ \alpha_{i} $, the required
results follow readily.
\end{proof}
 The following result considers the comparison on the lifetimes of parallel systems in terms
of the usual stochastic order when two sets of scale parameters weakly majorize each other.
\begin{theorem}\label{e2}
{\rm Let $ X_{1}, \ldots , X_{n} $ ($ X^{*}_{1}, \ldots , X^{*}_{n} $) be independent random variables with $ X_{i} \thicksim {\rm ENH}(\alpha, \lambda_{i}, \beta )$ ($ X^{*}_{i} \thicksim {\rm ENH}(\alpha, \lambda^{*}_{i}, \beta )$), $i = 1, \ldots, n $. If $ 0<\alpha\leq1 $ and $(\lambda_1,\ldots,\lambda_n) \stackrel{\rm w}{\succeq}
(\lambda^{*}_{1}, \ldots , \lambda^{*}_{n})$, then $X_{n:n}\geq_{\rm st}X^{*}_{n:n}$.}
\end{theorem}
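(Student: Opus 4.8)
The plan is to mimic the strategy of Theorem~\ref{e1}, but now with respect to the scale parameters. Writing the distribution function of the parallel system as
\[
G_{X_{n:n}}(x;\boldsymbol{\lambda})=\prod_{i=1}^{n}\big[1-e^{1-(1+\lambda_i x)^{\alpha}}\big]^{\beta},
\]
the conclusion $X_{n:n}\ge_{\rm st}X^{*}_{n:n}$ is equivalent to $G_{X_{n:n}}(x;\boldsymbol{\lambda})\le G_{X_{n:n}}(x;\boldsymbol{\lambda}^{*})$ for every $x$. Since $\boldsymbol{\lambda}\stackrel{\rm w}{\succeq}\boldsymbol{\lambda}^{*}$ says that $\boldsymbol{\lambda}^{*}$ is weakly supermajorized by $\boldsymbol{\lambda}$, I would invoke Lemma~\ref{mkl} applied to the survival function $1-G_{X_{n:n}}$ (equivalently, to $-G_{X_{n:n}}$): it then suffices to prove that, for fixed $x>0$, the map $\boldsymbol{\lambda}\mapsto G_{X_{n:n}}(x;\boldsymbol{\lambda})$ is increasing in each $\lambda_i$ and Schur-concave on $\Bbb R_{++}^{n}$.

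Monotonicity is the cheap part. Differentiating gives
\[
\frac{\partial G_{X_{n:n}}}{\partial\lambda_i}=\beta\,\alpha x\,\frac{(1+\lambda_i x)^{\alpha-1}e^{1-(1+\lambda_i x)^{\alpha}}}{1-e^{1-(1+\lambda_i x)^{\alpha}}}\,G_{X_{n:n}}(x;\boldsymbol{\lambda})\ge 0,
\]
so $G_{X_{n:n}}$ is increasing in each $\lambda_i$ for all $\alpha,\beta,\lambda>0$, with no restriction needed. For Schur-concavity I would use the same criterion (Theorem~3.A.4 in \cite{met}) as in Theorem~\ref{e1}: the function is symmetric, so it is enough to verify that for $i\ne j$
\[
(\lambda_i-\lambda_j)\Big(\tfrac{\partial G_{X_{n:n}}}{\partial\lambda_i}-\tfrac{\partial G_{X_{n:n}}}{\partial\lambda_j}\Big)\le 0 .
\]
After factoring out the common positive quantity $\beta\alpha x\,G_{X_{n:n}}$, this reduces to showing that
\[
h(\lambda)=\frac{(1+\lambda x)^{\alpha-1}e^{1-(1+\lambda x)^{\alpha}}}{1-e^{1-(1+\lambda x)^{\alpha}}}
\]
is decreasing in $\lambda$, for then $\lambda_i>\lambda_j$ forces the bracketed difference to be nonpositive.

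The heart of the argument, and the step where I expect the real work to lie, is the monotonicity of $h$; this is also precisely where the hypothesis $0<\alpha\le 1$ is used. The natural route is to split $h$ into two factors: $(1+\lambda x)^{\alpha-1}$, which is positive and nonincreasing in $\lambda$ exactly because $\alpha-1\le 0$, and $\tfrac{e^{1-w}}{1-e^{1-w}}$ with $w=(1+\lambda x)^{\alpha}$, which is positive and decreasing since $w$ is increasing in $\lambda$. A product of nonnegative decreasing factors is decreasing, giving the claim. The obstacle, and the reason Lemma~\ref{mlee} was prepared, is that when $\alpha>1$ the power factor $(1+\lambda x)^{\alpha-1}$ is \emph{increasing} and fights the exponential factor, so the crude split fails; one can still rescue it by writing $\tfrac{e^{1-w}}{1-e^{1-w}}=g(w)/w$ and hence $h(\lambda)=g\big((1+\lambda x)^{\alpha}\big)/(1+\lambda x)$, but under the stated hypothesis $\alpha\le 1$ the two-factor argument already suffices. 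Combining monotonicity with Schur-concavity and invoking Lemma~\ref{mkl} yields $G_{X_{n:n}}(x;\boldsymbol{\lambda})\le G_{X_{n:n}}(x;\boldsymbol{\lambda}^{*})$, that is, $X_{n:n}\ge_{\rm st}X^{*}_{n:n}$, which completes the proof.
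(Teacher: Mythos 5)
Your proposal is correct and follows essentially the same route as the paper's proof: the same application of Lemma \ref{mkl} (via $-G_{X_{n:n}}$) after showing the distribution function is increasing and Schur-concave in the $\lambda_i$'s, the same partial derivative, and the same reduction via Theorem 3.A.4 of \cite{met} to the monotonicity of $h(\lambda)=\frac{(1+\lambda x)^{\alpha-1}e^{1-(1+\lambda x)^{\alpha}}}{1-e^{1-(1+\lambda x)^{\alpha}}}$, a step the paper merely dismisses as ``easy to show'' and which your two-factor split settles cleanly under $0<\alpha\leq 1$. Your closing aside, writing $h(\lambda)=g\big((1+\lambda x)^{\alpha}\big)/(1+\lambda x)$ with $g$ as in Lemma \ref{mlee}, is a genuine bonus: it exhibits $h$ as a product of two positive decreasing factors for \emph{every} $\alpha>0$, showing the restriction $0<\alpha\leq 1$ is not actually needed for the Schur-concavity argument.
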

\begin{proof}
{\rm The distribution function of $ X_{n:n} $ can be written as
\begin{equation*}
G_{X_{n:n}}(x)= \prod_{i=1}^{n} [1-e^{1-(1+\lambda_i x)^{\alpha}}]^{\beta}.
\end{equation*}
Using Lemma \ref{mkl}, it is enough to show that the function $ G_{X_{n:n}}(x) $ is Schur-concave and increasing in $ \lambda_{i} $'s. The partial derivatives of $ G_{X_{n:n}}(x) $ with respect to $ \lambda_{i} $ is given by
\[
\frac{\partial G_{X_{n:n}}(x)}{\partial \lambda_{i}}=\frac{x\alpha \beta (1+\lambda_{i} x)^{\alpha-1}e^{1-(1+\lambda_{i} x)^{\alpha}}}{1-e^{1-(1+\lambda_{i} x)^{\alpha}}}G_{X_{n:n}}(x)\geq 0,
\]
So, $ G_{X_{n:n}}(x) $ is increasing in each $ \lambda_{i} $.

To prove its Schur-concavity, it follows from Theorem 3.A.4. in \cite{met} that we
have to show that for $ i\neq j $,
\[
(\lambda_{i}-\lambda_{j})\bigg(\frac{\partial G_{X_{n:n}}(x)}{\partial \lambda_{i}}-\frac{\partial G_{X_{n:n}}(x)}{\partial \lambda_{j}} \bigg)\leq 0,
\]
that is, for $ i\neq j $,
\begin{equation}\label{sq2}
(\lambda_{i}-\lambda_{j}) G_{X_{n:n}}(x)x\alpha \beta\bigg(\frac{(1+\lambda_{i} x)^{\alpha-1}e^{1-(1+\lambda_{i} x)^{\alpha}}}{1-e^{1-(1+\lambda_{i} x)^{\alpha}}}-\frac{(1+\lambda_{j} x)^{\alpha-1}e^{1-(1+\lambda_{j} x)^{\alpha}}}{1-e^{1-(1+\lambda_{j} x)^{\alpha}}}\bigg)\leq 0.
\end{equation}
It is easy to show that $ \frac{(1+\lambda_{i} x)^{\alpha-1}e^{1-(1+\lambda_{i} x)^{\alpha}}}{1-e^{1-(1+\lambda_{i} x)^{\alpha}}} $ is decreasing in $ \lambda_{i} $ for $ 0<\alpha\leq1 $, from which it follows
that \eqref{sq2} holds. This completes the proof of the result.}
\end{proof}
 The following theorem, generalizes Theorem \ref{e2} to a wider range of the scale parameters.
 \begin{theorem}\label{e22}
{\rm Let $ X_{1}, \ldots , X_{n} $ ($ X^{*}_{1}, \ldots , X^{*}_{n} $) be independent random variables with $ X_{i} \thicksim {\rm ENH}(\alpha, \lambda_{i}, \beta )$ ($ X^{*}_{i} \thicksim {\rm ENH}(\alpha, \lambda^{*}_{i}, \beta )$), $i = 1, \ldots, n $. If $ 0<\alpha\leq1 $ and $(\lambda_1,\ldots,\lambda_n) \leq
(\lambda^{*}_{1}, \ldots , \lambda^{*}_{n})$, that is, $ \lambda_{i}\leq \lambda^{*}_{i} $, $ i = 1, \ldots, n  $, then $X_{n:n}\geq_{\rm st}X^{*}_{n:n}$.}
\end{theorem}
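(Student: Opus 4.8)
The plan is to mirror the short proof of Theorem~\ref{e11}, exploiting the monotonicity of the distribution function of $ X_{n:n} $ in each scale parameter rather than its Schur-concavity. First I would recall from the proof of Theorem~\ref{e2} that the distribution function of the maximum factors as
\[
G_{X_{n:n}}(x)= \prod_{i=1}^{n} [1-e^{1-(1+\lambda_i x)^{\alpha}}]^{\beta},
\]
and that its partial derivative with respect to each $ \lambda_i $,
\[
\frac{\partial G_{X_{n:n}}(x)}{\partial \lambda_{i}}=\frac{x\alpha \beta (1+\lambda_{i} x)^{\alpha-1}e^{1-(1+\lambda_{i} x)^{\alpha}}}{1-e^{1-(1+\lambda_{i} x)^{\alpha}}}\,G_{X_{n:n}}(x),
\]
is nonnegative for every $ x>0 $. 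Indeed, each of the factors $ x $, $ \alpha $, $ \beta $, $ (1+\lambda_i x)^{\alpha-1} $ and $ e^{1-(1+\lambda_i x)^{\alpha}} $ is positive, while the denominator $ 1-e^{1-(1+\lambda_i x)^{\alpha}} $ is positive because $ (1+\lambda_i x)^{\alpha}>1 $ for $ x>0 $. Hence $ G_{X_{n:n}}(x) $ is increasing in each $ \lambda_i $.

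Next I would feed the componentwise hypothesis into this monotonicity. Since $ \lambda_i\le \lambda_i^{*} $ for every $ i $, raising the scale parameters one coordinate at a time from $ (\lambda_1,\ldots,\lambda_n) $ to $ (\lambda^{*}_{1},\ldots,\lambda^{*}_{n}) $ can only increase $ G_{X_{n:n}} $, so
\[
G_{X_{n:n}}(x)\le G_{X^{*}_{n:n}}(x)\qquad\text{for all }x.
\]
Passing to survival functions then gives $ \bar{G}_{X_{n:n}}(x)\ge \bar{G}_{X^{*}_{n:n}}(x) $ for all $ x $, which is exactly the definition of $ X_{n:n}\geq_{\rm st}X^{*}_{n:n} $.

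There is no serious obstacle here, since the result is a direct corollary of the monotonicity already isolated in the proof of Theorem~\ref{e2}; the only point demanding care is the \emph{direction} of the inequality. One must remember that a larger scale parameter inflates $ (1+\lambda_i x)^{\alpha} $ and hence enlarges the distribution function, so the system built from the smaller $ \lambda_i $'s is the stochastically larger one. I would also note that this argument uses only the \emph{sign} of the partial derivative, which holds for every $ \alpha>0 $; thus the restriction $ 0<\alpha\le1 $ in the statement (inherited from Theorem~\ref{e2}, where it was genuinely needed for Schur-concavity) is not required for the componentwise version, and the conclusion in fact persists for all $ \alpha>0 $.
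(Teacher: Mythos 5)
Your proposal is correct and follows essentially the same route as the paper, whose (very terse) proof likewise invokes the monotonicity of $G_{X_{n:n}}(x)$ in each $\lambda_i$ established in the proof of Theorem~\ref{e2} together with the definition of the usual stochastic order. Your additional observation is also sound: the sign of $\partial G_{X_{n:n}}(x)/\partial\lambda_i$ is positive for every $\alpha>0$, so the hypothesis $0<\alpha\leq 1$ (needed in Theorem~\ref{e2} only for Schur-concavity) is indeed superfluous for this componentwise version.
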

\begin{proof}
By using the definition of the usual stochastic order and the
fact that $ G_{X_{n:n}}(x) $ is increasing in each $ \lambda_{i} $, the required
results follow easily.
\end{proof}
Now, we discuss stochastic comparison between the largest order statistics in the sense of the likelihood ratio order.
\begin{theorem}\label{e3}
{\rm Let $ X_{1}, \ldots , X_{n} $ be independent random variables with $ X_{i} \thicksim {\rm ENH}(\alpha, \lambda, \beta_{i} )$ and  $ X^{*}_{1}, \ldots , X^{*}_{n} $ be another set of independent random variables with $ X^{*}_{i} \thicksim {\rm ENH}(\alpha, \lambda, \beta^{*}_{i} ), 
i = 1, \ldots, n $. Then,
 $ X_{n:n}\geq_{\rm lr}X^{*}_{n:n} $  if and only if $ \sum_{i=1}^{n}\beta_{i}\geq \sum_{i=1}^{n}\beta^{*}_{i} $.}
\end{theorem}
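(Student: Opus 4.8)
The plan is to exploit the fact that all components here share the common shape parameter $\alpha$ and scale parameter $\lambda$, so that only the parameters $\beta_i$ differ across components. First I would write the distribution function of the parallel system $X_{n:n}$ as the product of the individual ENH distribution functions. Because the factor $1-e^{1-(1+\lambda x)^{\alpha}}$ is identical for every component, the product collapses to
\[
G_{X_{n:n}}(x) = \prod_{i=1}^{n}\left[1-e^{1-(1+\lambda x)^{\alpha}}\right]^{\beta_i} = \left[1-e^{1-(1+\lambda x)^{\alpha}}\right]^{\sum_{i=1}^{n}\beta_i}.
\]
In other words $X_{n:n}\thicksim{\rm ENH}(\alpha,\lambda,\beta_+)$ with $\beta_+=\sum_{i=1}^{n}\beta_i$, and likewise $X^{*}_{n:n}\thicksim{\rm ENH}(\alpha,\lambda,\beta^{*}_+)$ with $\beta^{*}_+=\sum_{i=1}^{n}\beta^{*}_i$. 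This single observation is the crux of the argument: it reduces the comparison of two parallel systems to the comparison of two members of the same one-parameter exponentiated family differing only in the power.

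Writing $B(x)=1-e^{1-(1+\lambda x)^{\alpha}}$ for the common baseline distribution function and $b(x)=B'(x)$ for its density, the density of $X_{n:n}$ is $g_{X_{n:n}}(x)=\beta_+\,B(x)^{\beta_+-1}b(x)$, and analogously for $X^{*}_{n:n}$. Forming the likelihood ratio, I expect the baseline density $b(x)$ to cancel, leaving
\[
\frac{g_{X_{n:n}}(x)}{g_{X^{*}_{n:n}}(x)} = \frac{\beta_+}{\beta^{*}_+}\,B(x)^{\beta_+-\beta^{*}_+}.
\]
By the definition of the likelihood ratio order recorded in Section \ref{2}, the relation $X_{n:n}\geq_{\rm lr}X^{*}_{n:n}$ is equivalent to this ratio being increasing in $x$.

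The final step is to read off monotonicity from the sign of the exponent. Since $B$ is a distribution function it is increasing, and on $(0,\infty)$ it takes values in $(0,1)$; hence $B(x)^{\beta_+-\beta^{*}_+}$ is increasing in $x$ precisely when $\beta_+-\beta^{*}_+\geq0$, is constant when $\beta_+=\beta^{*}_+$, and is strictly decreasing when $\beta_+-\beta^{*}_+<0$. This yields both directions of the equivalence simultaneously: the likelihood ratio is (weakly) increasing if and only if $\sum_{i=1}^{n}\beta_i=\beta_+\geq\beta^{*}_+=\sum_{i=1}^{n}\beta^{*}_i$. I do not anticipate a serious obstacle once the factorization is in hand; the only point requiring care is the \emph{only if} direction, where one must note that a negative exponent forces the ratio to be strictly decreasing and therefore rules out the likelihood ratio order, so that monotonicity genuinely characterizes the inequality $\beta_+\geq\beta^{*}_+$ rather than being merely sufficient for it.
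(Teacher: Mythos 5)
Your proposal is correct and follows essentially the same route as the paper: collapse the product of common-baseline ENH distribution functions into a single ENH distribution with exponent $\sum_{i=1}^{n}\beta_i$, form the likelihood ratio so the baseline density cancels, and read off monotonicity from the sign of the exponent. Your explicit remark that a negative exponent makes the ratio strictly decreasing, which settles the \emph{only if} direction, is a point the paper's proof states only implicitly, but it is the same argument.
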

\begin{proof}
$ X_{n:n} $ has the distribution function $  F_{n}(x) = (1-e^{1-(1+\lambda x)^{\alpha}})^{\sum_{i=1}^{n}\beta_{i}} $ and $ X^{*}_{n:n} $ has the distribution function $ G_{n}(x) =  (1-e^{1-(1+\lambda x)^{\alpha}})^{\sum_{i=1}^{n}\beta^{*}_{i}} $, and 
the corresponding density functions are
\[
f_{n}(x)=\frac{\alpha \sum_{i=1}^{n}\beta_{i}\lambda (1+\lambda x)^{\alpha-1}e^{1-(1+\lambda x)^{\alpha}}}{ (1-e^{1-(1+\lambda x)^{\alpha}})^{1-\sum_{i=1}^{n}\beta_{i}}},
\]
\[
g_{n}(x)=\frac{\alpha \sum_{i=1}^{n}\beta^{*}_{i}\lambda (1+\lambda x)^{\alpha-1}e^{1-(1+\lambda x)^{\alpha}}}{ (1-e^{1-(1+\lambda x)^{\alpha}})^{1-\sum_{i=1}^{n}\beta^{*}_{i}}}\quad x>0.
\]

Thus, for $ x>0 $, the ratio of the density functions of $ X_{n:n} $ and $ X^{*}_{n:n} $ is
$ \frac{f_{n}(x)}{g_{n}(x)}=\frac{\sum_{i=1}^{n}\beta_{i}}{\sum_{i=1}^{n}\beta^{*}_{i}}(1-e^{1-(1+\lambda x)^{\alpha}})^{\sum_{i=1}^{n}\beta_{i}-\sum_{i=1}^{n}\beta^{*}_{i}}. $
Therefore, $ \frac{f_{n}(x)}{g_{n}(x)} $ is increasing in $ x $ if and only if $ \sum_{i=1}^{n}\beta_{i}\geq \sum_{i=1}^{n}\beta^{*}_{i} $.
\end{proof}
\begin{theorem}\label{e4}
{\rm
Let $ X_{1}, \ldots , X_{n} $ be independent random variables with $  X_{i} \thicksim {\rm ENH}(\alpha, \lambda, \beta_{i} ),$
and  $ X^{*}_{1}, \ldots , X^{*}_{n}$ be another set of independent random variables with $ X^{*}_{i} \thicksim {\rm ENH}(\alpha, \lambda, \beta^{*}_{i} )$, 
$i = 1, \ldots, n $.  Then for $ \alpha<1 $ and $ \lambda>0 $, we have
\[
\sum_{i=1}^{n}\beta^{*}_{i}\leq \sum_{i=1}^{n}\beta_{i}<1 \Longrightarrow X_{n:n}\geq_{\rm disp}X^{*}_{n:n}.
\]
 }
\end{theorem}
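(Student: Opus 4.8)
The plan is to reduce the dispersive comparison to a hazard-rate comparison together with an aging property, and then invoke the classical fact that the hazard rate order strengthens to the dispersive order in the presence of a decreasing failure rate (DFR). By Theorem \ref{e3}, the two largest order statistics are themselves ENH variables, $ X_{n:n}\sim{\rm ENH}(\alpha,\lambda,\beta) $ and $ X^{*}_{n:n}\sim{\rm ENH}(\alpha,\lambda,\beta^{*}) $ with $ \beta=\sum_{i=1}^{n}\beta_{i} $ and $ \beta^{*}=\sum_{i=1}^{n}\beta^{*}_{i} $. Since $ \beta^{*}\le\beta $, that theorem (via the likelihood ratio order, which implies the hazard rate order) already yields $ X^{*}_{n:n}\leq_{\rm hr}X_{n:n} $, so the first ingredient is essentially free.

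The second and decisive step is to show that an ${\rm ENH}(\alpha,\lambda,\gamma)$ law is DFR whenever $ 0<\alpha\le 1 $ and $ 0<\gamma\le 1 $; applying this with $ \gamma=\beta<1 $ makes $ X_{n:n} $ a DFR variable. To this end I would write the survival function as $ \bar{G}(x)=1-F_{0}(x)^{\gamma} $, where $ F_{0}(x)=1-e^{1-(1+\lambda x)^{\alpha}} $ is the underlying NH distribution, and express the hazard rate as a product
\[
h_{G}(x)=h_{F_{0}}(x)\,\psi\big(F_{0}(x)\big),\qquad
\psi(p)=\gamma\,\frac{p^{\gamma-1}(1-p)}{1-p^{\gamma}}.
\]
Here $ h_{F_{0}}(x)=\alpha\lambda(1+\lambda x)^{\alpha-1} $ is the NH hazard, which is decreasing for $ \alpha\le 1 $, and $ p=F_{0}(x) $ is increasing in $ x $. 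Hence it suffices to prove that $ \psi $ is decreasing on $ (0,1) $, since a product of two nonnegative decreasing functions of $ x $ is again decreasing, yielding the DFR conclusion.

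Differentiating $ \log\psi $ and clearing the positive factor $ p $, the required monotonicity $ \psi'\le 0 $ reduces after a short simplification to
\[
\frac{\gamma}{1-p^{\gamma}}\le\frac{1}{1-p},\qquad\text{equivalently}\qquad p^{\gamma}-\gamma p\le 1-\gamma ,
\]
which holds because $ \rho(p)=p^{\gamma}-\gamma p $ satisfies $ \rho'(p)=\gamma(p^{\gamma-1}-1)\ge 0 $ on $ (0,1) $ and is therefore maximized at $ p=1 $, where $ \rho(1)=1-\gamma $. I expect this DFR verification to be the main obstacle, both in spotting the multiplicative decomposition of $ h_{G} $ and in distilling its monotonicity down to the clean inequality $ p^{\gamma}-\gamma p\le 1-\gamma $; note that the hypothesis $ \beta<1 $ enters precisely to guarantee the DFR property through $ \psi $, while $ \alpha<1 $ guarantees the base NH hazard is decreasing.

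The final step is to combine the two ingredients: since $ X^{*}_{n:n}\leq_{\rm hr}X_{n:n} $ and $ X_{n:n} $ is DFR, the standard implication that the hazard rate order together with a decreasing failure rate yields the dispersive order (see, e.g., \cite{ss}) gives $ X^{*}_{n:n}\leq_{\rm disp}X_{n:n} $, that is, $ X_{n:n}\geq_{\rm disp}X^{*}_{n:n} $, as required. The hazard-rate comparison and the concluding implication are immediate from Theorem \ref{e3} and the cited ordering result, so the whole argument hinges on the DFR lemma established in the middle step.
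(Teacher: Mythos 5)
Your proposal is correct and follows essentially the same route as the paper: the hazard rate order $X^{*}_{n:n}\leq_{\rm hr}X_{n:n}$ obtained from Theorem \ref{e3} (likelihood ratio implies hazard rate), the DFR property of $X_{n:n}$ under $\alpha<1$ and $\sum_{i=1}^{n}\beta_{i}<1$, and Theorem 3.B.20 of \cite{ss} to upgrade to the dispersive order. The only difference is that the paper dismisses the DFR step as ``easy to show'' with a citation to \cite{nh}, whereas you verify it explicitly --- and correctly --- via the decomposition $h_{G}(x)=h_{F_{0}}(x)\,\psi(F_{0}(x))$ and the reduction to $p^{\gamma}-\gamma p\leq 1-\gamma$ on $(0,1)$.
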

\begin{proof}
From Nadarajah and Haghighi \cite{nh}, it is easy to show that $ X_{n:n} $ has decreasing hazard rate (DHR) property
if $ \sum_{i=1}^{n}\beta_{i}<1 $ and $ \alpha<1 $. Now, the desired result follows from Theorem 3.B.20 of \cite{ss} and the fact that
likelihood ratio order implies hazard rate order.
\end{proof}
\begin{theorem}\label{e5}
{\rm Suppose $ X_{1}, \ldots , X_{n} $ and $ X^{*}_{1}, \ldots , X^{*}_{n}$ are  independent samples with $  X_{i} \thicksim {\rm ENH}(\alpha_{1}, \lambda_{1}, \beta_{i} ),$ and $ X^{*}_{i} \thicksim {\rm ENH}(\alpha_{2}, \lambda_{2}, \beta^{*}_{i} ), i = 1, \ldots, n $. Then for $ \alpha_{1}\leq\alpha_{2} $ and $ \lambda>0 $, we have
\[
\sum_{i=1}^{n}\beta_{i} = \sum_{i=1}^{n}\beta^{*}_{i}  \Longrightarrow X_{n:n}\geq_{\rm c}X^{*}_{n:n}.
\]

}
\end{theorem}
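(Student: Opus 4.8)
The plan is to work directly from the definition of the convex transform order. Because the components within each sample share their shape and scale parameters, the distribution function of a parallel system is the exponentiated--NH distribution raised to the total power. Writing $p=\sum_{i=1}^{n}\beta_{i}=\sum_{i=1}^{n}\beta_{i}^{*}$ for the common exponent, the maxima have distribution functions $\Phi(x)=[H_{1}(x)]^{p}$ and $\Phi^{*}(x)=[H_{2}(x)]^{p}$, where $H_{1}(x)=1-e^{1-(1+\lambda_{1}x)^{\alpha_{1}}}$ and $H_{2}(x)=1-e^{1-(1+\lambda_{2}x)^{\alpha_{2}}}$ are the underlying NH distribution functions. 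By definition, $X_{n:n}\geq_{\rm c}X^{*}_{n:n}$ is the same as $X^{*}_{n:n}\leq_{\rm c}X_{n:n}$, that is, the convexity of $\Phi^{-1}\Phi^{*}(x)$ on $x\geq 0$.

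First I would exploit the fact that the two exponents coincide. Since $\Phi^{-1}(u)=H_{1}^{-1}(u^{1/p})$ while $\Phi^{*}(x)=[H_{2}(x)]^{p}$, the common power cancels:
\[
\Phi^{-1}\bigl(\Phi^{*}(x)\bigr)=H_{1}^{-1}\bigl([H_{2}(x)^{p}]^{1/p}\bigr)=H_{1}^{-1}\bigl(H_{2}(x)\bigr).
\]
Thus the problem collapses to comparing the two baseline NH distributions in the convex transform order, and the value of the common exponent $p$ (hence the individual $\beta_{i},\beta_{i}^{*}$) plays no further role.

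Next I would compute $H_{1}^{-1}H_{2}$ in closed form. Inverting $H_{1}$ gives $H_{1}^{-1}(u)=\bigl((1-\log(1-u))^{1/\alpha_{1}}-1\bigr)/\lambda_{1}$, while $H_{2}(x)$ satisfies $1-\log(1-H_{2}(x))=(1+\lambda_{2}x)^{\alpha_{2}}$. Substituting the latter into the former yields
\[
H_{1}^{-1}\bigl(H_{2}(x)\bigr)=\frac{(1+\lambda_{2}x)^{\alpha_{2}/\alpha_{1}}-1}{\lambda_{1}}.
\]
Setting $r=\alpha_{2}/\alpha_{1}$, the second derivative equals $r(r-1)\lambda_{2}^{2}(1+\lambda_{2}x)^{r-2}/\lambda_{1}$, which is nonnegative precisely because $\alpha_{1}\leq\alpha_{2}$ forces $r\geq 1$. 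This gives the convexity of the composition and hence $X_{n:n}\geq_{\rm c}X^{*}_{n:n}$.

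The only genuinely delicate point is the algebraic cancellation in the second step: it is what renders the closed form elementary, and it works only because the hypothesis $\sum\beta_{i}=\sum\beta_{i}^{*}$ makes the two exponentiation powers equal (were they unequal, $\Phi^{-1}\Phi^{*}$ would no longer reduce to $H_{1}^{-1}H_{2}$ and no such clean power law would emerge). Everything after the cancellation is a routine inversion followed by a one-line sign check on the second derivative, so I expect no real obstacle beyond keeping the composition of right-continuous inverses organized.
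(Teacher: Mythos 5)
Your proposal is correct and follows essentially the same route as the paper: the equal exponent sums $\sum\beta_{i}=\sum\beta_{i}^{*}$ cancel in the composition, yielding the same closed form $h(x)=\bigl((1+\lambda_{2}x)^{\alpha_{2}/\alpha_{1}}-1\bigr)/\lambda_{1}$, whose second derivative is nonnegative when $\alpha_{1}\leq\alpha_{2}$. Your framing of the cancellation as a reduction to the baseline NH distributions is a slightly cleaner presentation, but the computation is identical to the paper's.
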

\begin{proof}
$ X_{n:n} $ has the distribution function $  F_{X_{n:n}}(x) = (1-e^{1-(1+\lambda_1 x)^{\alpha_1}})^{\sum_{i=1}^{n}\beta_{i}} $ and $ X^{*}_{n:n} $ has the distribution function $ G_{X^{*}_{n:n}}(x) =  (1-e^{1-(1+\lambda_2 x)^{\alpha_2}})^{\sum_{i=1}^{n}\beta^{*}_{i}} $. Note that
\[
F^{-1}_{X_{n:n}}(x) = \dfrac{(1-\log(1-x^{\frac{1}{\sum_{i=1}^{n}\beta_{i}}}))^{\frac{1}{\alpha_1}}-1}{\lambda_1}
\]
and if
$ \sum_{i=1}^{n}\beta_{i} = \sum_{i=1}^{n}\beta^{*}_{i} $ then
\[
F^{-1}_{X_{n:n}}(G_{X^{*}_{n:n}}(x)) = \dfrac{(1+\lambda_2 x)^{\frac{\alpha_2}{\alpha_1}}-1}{\lambda_1} = h(x)
\]

In order to obtain the required result it suffices
to show that $ h(x) $ is convex in $ x $. The first and second partial derivatives $ h(x) $ with
respect to $ x $, respectively, are
\[
\frac{\partial h(x)}{\partial x} =\dfrac{\lambda_2}{\lambda_1}\dfrac{\alpha_2}{\alpha_1}(1+\lambda_2 x)^{\frac{\alpha_2}{\alpha_1}-1}
\]
and
\[
\frac{\partial^{2} h(x)}{\partial x^{2}} =\dfrac{\lambda_2}{\lambda_1}\dfrac{\alpha_2}{\alpha_1}\lambda_2 (\frac{\alpha_2}{\alpha_1}-1)(1+\lambda_2 x)^{\frac{\alpha_2}{\alpha_1}-2}.
\]
Thus, for any $ \alpha_{1}\leq\alpha_{2} $ we immediately observe that $ \frac{\partial^{2} h(x)}{\partial x^{2}} $ is positive, which
completes the proof of the theorem.
\end{proof}
As a direct consequence of Theorem \ref{e5}, we present the following corollary, which is of independent interest in economics.
\begin{corollary}
{\rm Suppose $ X_{1}, \ldots , X_{n} $ and $ X^{*}_{1}, \ldots , X^{*}_{n}$ are  independent samples with $  X_{i} \thicksim {\rm ENH}(\alpha_{1}, \lambda_{1}, \beta_{i} ),$ and $ X^{*}_{i} \thicksim {\rm ENH}(\alpha_{2}, \lambda_{2}, \beta^{*}_{i} ), i = 1, \ldots, n $. Then for $ \alpha_{1}\leq\alpha_{2} $ and $ \lambda>0 $, we have
\[
\sum_{i=1}^{n}\beta_{i} = \sum_{i=1}^{n}\beta^{*}_{i} \Longrightarrow X_{n:n}\geq_{\rm Lorenz}X^{*}_{n:n}.
\]

}
\end{corollary}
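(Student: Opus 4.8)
The plan is to derive this directly from Theorem~\ref{e5}, since the hypotheses of the corollary are word-for-word those of that theorem, together with the general implication---recorded in the text immediately after the definition of the convex transform and Lorenz orders---that the convex transform order implies the Lorenz order. In other words, all of the analytic work has already been carried out in establishing the convex transform comparison, and nothing new about the ENH family needs to be computed here.

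First I would invoke Theorem~\ref{e5}: under $\alpha_{1}\leq\alpha_{2}$, $\lambda>0$, and $\sum_{i=1}^{n}\beta_{i}=\sum_{i=1}^{n}\beta^{*}_{i}$, we already know that $X_{n:n}\geq_{\rm c}X^{*}_{n:n}$, equivalently $X^{*}_{n:n}\leq_{\rm c}X_{n:n}$. The only remaining task is to transfer this to the Lorenz order. Applying the stated implication $\leq_{\rm c}\Rightarrow\leq_{\rm Lorenz}$ to the pair $X^{*}_{n:n}\leq_{\rm c}X_{n:n}$ gives $X^{*}_{n:n}\leq_{\rm Lorenz}X_{n:n}$, that is, $X_{n:n}\geq_{\rm Lorenz}X^{*}_{n:n}$, which is exactly the assertion. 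I would take particular care that the orientation of the inequalities is consistent: Theorem~\ref{e5} places $X^{*}_{n:n}$ as the smaller variate in the convex transform order, the implication preserves this orientation, and hence the Lorenz comparison emerges in the stated direction.

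The main point to watch---rather than a genuine obstacle---is precisely this bookkeeping of the direction of the implication, together with the fact that the Lorenz order requires nonnegative variates with finite positive means; both are automatic in the ENH setting, so no extra hypotheses are needed. If one wished to make the corollary fully self-contained, the only additional work would be to reprove the standard chain $\leq_{\rm c}\Rightarrow\leq_{\rm Lorenz}$, but since this implication is already stated in Section~\ref{2} it may simply be cited. Consequently the argument is essentially a one-line consequence of Theorem~\ref{e5}.
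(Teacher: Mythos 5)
Your proposal is correct and matches the paper's intended argument exactly: the paper presents this corollary as a direct consequence of Theorem~3.7, relying on the implication from the convex transform order to the Lorenz order stated in Section~2. Your careful tracking of the orientation of the orders is sound, and no further work is needed.
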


\subsection{Dependent samples with Archimedean structure}
Recently, some efforts are made to investigate stochastic comparisons on order statistics of random variables
with Archimedean copulas. See, for example, \cite{btr}, \cite{ll}, \cite{lfr} and
\cite{flld}. In this section, we derive new result on the usual stochastic order between extreme order statistics of two heterogeneous random vectors with the dependent components having exponentiated scale (ES) marginals and Archimedean copula structure. Recall that random variable $X$ belongs to the ES family of distributions if $ X\sim H(x) = [G(\lambda x)]^\alpha $, where $\alpha, \lambda>0 $ and $ G $ is called the baseline distribution and is an
absolutely continuous distribution function. We denote this family by $ {\rm ES}(\alpha, \lambda) $. 
Specifically, by $ \boldsymbol{X} \sim {\rm ES}(\boldsymbol{\alpha}, \boldsymbol{\lambda}, \phi ) $ we denote the sample having the Archimedean copula
with generator $ \phi $ and for $i=1,...,n$, $ X_i\sim ES(\alpha_i, \lambda_i ) $. In the following theorem, for the ES samples with Archimedean survival copulas, we present the usual
stochastic order on the smallest order statistics under weakly super-majorization order between shape parameters. For $ \boldsymbol{X} \sim {\rm ES}(\boldsymbol{\alpha}, \lambda, \phi ) $ and $ \boldsymbol{X^{*}} \sim {\rm ES}(\boldsymbol{\alpha^{*}}, \lambda, \phi) $, Bashkar et al. \cite{btr} in Theorem 4.1 showed that $ X_{1:n}\leq_{\rm st} X^{*}_{1:n} $ if $ \boldsymbol{\alpha}\mathop \succeq \limits^{{\mathop{\rm w}} } \boldsymbol{\alpha^{*}} $. Theorem \ref{ede1} generalizes the result of \cite{btr} to ES samples with not necessarily a common dependence structure.  The smallest order statistic $ X_{1:n} $ of the sample $ \boldsymbol{X} \sim {\rm ES}(\boldsymbol{\alpha}, \lambda, \phi ) $ gets survival function
\begin{equation}\label{for}
\bar{G}_{X_{1:n} }(x)  = \phi \big( \sum_{i=1}^{n}\psi(1-G^{\alpha_{i}}(\lambda x))\big) = J(\boldsymbol{\alpha}, \lambda, x, \phi)
\end{equation}

 \begin{theorem}\label{ede1}
{\rm For $ \boldsymbol{X} \sim {\rm ES}(\boldsymbol{\alpha}, \lambda, \phi_{1} ) $ and $ \boldsymbol{X^{*}} \sim {\rm ES}(\boldsymbol{\alpha^{*}}, \lambda, \phi_{2} ) $, if  $ \psi_2\circ\phi_1 $ is super-additive, then $ \boldsymbol{\alpha} \stackrel{\rm w}{\succeq}
\boldsymbol{\alpha^{*}} $ implies $X_{1:n}\leq_{\rm st}X^{*}_{1:n}$.
}
\end{theorem}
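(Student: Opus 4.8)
The plan is to decouple the two differences between the samples --- the change of generator from $\phi_1$ to $\phi_2$ and the change of shape parameters from $\boldsymbol{\alpha}$ to $\boldsymbol{\alpha^{*}}$ --- and to dispose of them one at a time. By \eqref{for}, the assertion $X_{1:n}\le_{\rm st}X^{*}_{1:n}$ is equivalent to $\bar{G}_{X_{1:n}}(x)\le \bar{G}_{X^{*}_{1:n}}(x)$ for every $x$, that is,
\begin{equation*}
\phi_1\Big(\sum_{i=1}^{n}\psi_1\big(1-G^{\alpha_i}(\lambda x)\big)\Big)\ \le\ \phi_2\Big(\sum_{i=1}^{n}\psi_2\big(1-G^{\alpha^{*}_i}(\lambda x)\big)\Big).
\end{equation*}
I would insert the intermediate quantity $\phi_2\big(\sum_{i=1}^{n}\psi_2(1-G^{\alpha_i}(\lambda x))\big)$ --- the survival function the sample $\boldsymbol{X}$ would have if it carried the generator $\phi_2$ in place of $\phi_1$ --- and prove the two inequalities that chain through it.

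First I would establish the generator-change step: for all $u_1,\ldots,u_n\in[0,1]$,
\begin{equation*}
\phi_1\Big(\sum_{i=1}^{n}\psi_1(u_i)\Big)\ \le\ \phi_2\Big(\sum_{i=1}^{n}\psi_2(u_i)\Big).
\end{equation*}
Writing $a_i=\psi_1(u_i)\ge 0$ and $k=\psi_2\circ\phi_1$, super-additivity of $k$ gives $k\big(\sum_i a_i\big)\ge\sum_i k(a_i)$. Since $\phi_1(a_i)=u_i$ we have $k(a_i)=\psi_2(u_i)$ and $k\big(\sum_i a_i\big)=\psi_2\big(\phi_1(\sum_i\psi_1(u_i))\big)$, so the inequality reads $\psi_2\big(\phi_1(\sum_i\psi_1(u_i))\big)\ge\sum_i\psi_2(u_i)$; applying the decreasing function $\phi_2$ to both sides reverses it and yields the display. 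Taking $u_i=1-G^{\alpha_i}(\lambda x)$ then gives $\bar{G}_{X_{1:n}}(x)\le \phi_2\big(\sum_{i=1}^{n}\psi_2(1-G^{\alpha_i}(\lambda x))\big)$.

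It remains to handle the parameter-change step, in which both samples now share the generator $\phi_2$. Here $\boldsymbol{\alpha}\stackrel{\rm w}{\succeq}\boldsymbol{\alpha^{*}}$, so Theorem~4.1 of \cite{btr}, applied to two ${\rm ES}$ samples with the common generator $\phi_2$, gives
\begin{equation*}
\phi_2\Big(\sum_{i=1}^{n}\psi_2\big(1-G^{\alpha_i}(\lambda x)\big)\Big)\ \le\ \phi_2\Big(\sum_{i=1}^{n}\psi_2\big(1-G^{\alpha^{*}_i}(\lambda x)\big)\Big)=\bar{G}_{X^{*}_{1:n}}(x).
\end{equation*}
Chaining this with the bound from the first step proves $\bar{G}_{X_{1:n}}(x)\le\bar{G}_{X^{*}_{1:n}}(x)$ for all $x$, as required. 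The hard part is the first step: one must exploit that each $\psi_i$ is the inverse of a \emph{decreasing} generator, hence nonnegative on $[0,1]$, so that the arguments of $k$ lie in its domain (making super-additivity applicable) and $\phi_2$ correctly reverses the inequality. Recognizing that super-additivity of $\psi_2\circ\phi_1$ is exactly the hypothesis converting the two-generator comparison into a single-generator one --- thereby reducing the theorem to the already-established result of \cite{btr} --- is the conceptual crux.
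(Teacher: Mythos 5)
Your proof is correct, and it rests on the same two-step interpolation idea as the paper's, but with the two halves handled in mirrored fashion. The paper pivots through the intermediate quantity $J(\boldsymbol{\alpha^{*}},\lambda,x,\phi_1)$: it first changes the parameters under the fixed generator $\phi_1$, proving from scratch (via partial derivatives, Theorem 3.A.4 of \cite{met}, and Lemma \ref{mkl}) that $J(\boldsymbol{\alpha},\lambda,x,\phi_1)$ is increasing and Schur-concave in the $\alpha_i$'s, and only then changes the generator at $\boldsymbol{\alpha^{*}}$ by citing Lemma A.1 of \cite{lfr}. You pivot through the opposite corner, $\phi_2\big(\sum_{i=1}^{n}\psi_2(1-G^{\alpha_i}(\lambda x))\big)$: you change the generator first, giving a short self-contained derivation of $\phi_1\big(\sum_i\psi_1(u_i)\big)\le\phi_2\big(\sum_i\psi_2(u_i)\big)$ from super-additivity of $\psi_2\circ\phi_1$ --- this is precisely a re-proof of the Lemma A.1 that the paper cites (your extension of super-additivity from two to $n$ summands is tacit but a standard induction, and you correctly track that $\phi_2\circ\psi_2$ is the identity on $[0,1]$ and that applying the decreasing $\phi_2$ reverses the inequality) --- and you then outsource the parameter step to Theorem 4.1 of \cite{btr}, which the paper itself quotes immediately before the theorem, so the citation is legitimate. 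The order of the two steps is immaterial since each inequality holds for an arbitrary fixed generator, respectively arbitrary fixed parameter vector. What the trade buys: your route is shorter and makes the role of the super-additivity hypothesis completely transparent, at the price of leaning on \cite{btr} for the majorization half; the paper's route keeps the Schur-concavity computation in-house, so that its proof genuinely contains the common-generator case of \cite{btr} as a byproduct, but treats the generator comparison as a black box.
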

\begin{proof}
According to Equation \eqref{for}, $ X_{1:n} $ and $ X^{*}_{1:n} $ have their respective survival functions $ J(\boldsymbol{\alpha}, \lambda, x, \phi_1) $ and $ J(\boldsymbol{\alpha^{*}}, \lambda, x, \phi_2) $ for $ x \geq 0 $.

First we show that $ J(\boldsymbol{\alpha}, \lambda, x, \phi_1) $ is increasing and Schur-concave function of $ \alpha_i, i=1,\ldots,n$. Since $ \phi_1 $ is decreasing, we have
\[
\dfrac{\partial J(\boldsymbol{\alpha}, \lambda, x, \phi_1)}{\partial \alpha_i} =- \dfrac{F^{\alpha_{i}}(\lambda x)\log(F(\lambda x))\phi_1^{\prime}\big(\sum_{i=1}^{n}\psi(1- F^{\alpha_{i}}(\lambda x))\big)}{\phi_1^{\prime}(\psi(1- F^{\alpha_{i}}(\lambda x)))}\geq 0,
\]
\[
 \text{for all}\quad x>0,
\]	
That is, $ J(\boldsymbol{\alpha}, \lambda, x, \phi_1) $ is increasing in $ \alpha_{i} $ for $ i = 1, \ldots , n $.

To prove its Schur-concavety, it follows from Theorem 3.A.4. in \cite{met} that we
have to show that for $ i\neq j $,
\[
(\alpha_i - \alpha_j) \big(\dfrac{\partial J(\boldsymbol{\alpha}, \lambda, x, \phi_1)}{\partial \alpha_i}-\dfrac{\partial J(\boldsymbol{\alpha}, \lambda, x, \phi_1)}{\partial \alpha_j}\big) \leq 0,
\]
that is, for $ i\neq j $,
\begin{align*}
-\log(F(\lambda x))\phi_1^{\prime}\big(\sum_{i=1}^{n}\psi_1(1- F^{\alpha_{i}}(\lambda x)))\big)(\alpha_{i}-\alpha_{j})
 \end{align*}
 \begin{align}\label{de2}
\bigg(\dfrac{F^{\alpha_{i}}(\lambda x)}{\phi_1^{\prime}(\psi_1(1- F^{\alpha_{i}}(\lambda x)))}-\dfrac{F^{\alpha_{j}}(\lambda x)}{\phi_1^{\prime}(\psi_1(1- F^{\alpha_{j}}(\lambda x)))}\bigg)\leq 0.
\end{align}
Now, let us consider the function $ g(\alpha) = \dfrac{F^{\alpha}(\lambda x)}{\phi^{\prime}(\psi(1- F^{\alpha}(\lambda x)))} $. Taking derivative with respect to $ \alpha $, we get 
\[
g^{\prime}(\alpha) \mathop  = \limits^{{\mathop{\rm sgn}} } F^{\alpha}(\lambda x)\log(F(\lambda x))\phi^{\prime}(\psi(1- F^{\alpha}(\lambda x)))
\]
\[
+\dfrac{F^{2\alpha}(\lambda x)\log(F(\lambda x))}{\phi^{\prime}(\psi(1- F^{\alpha}(\lambda x)))}\phi^{\prime \prime}(\psi(1- F^{\alpha}(\lambda x)))\geq0.
\]
Thus, $ g(\alpha) $ is increasing with respect
to $ \alpha $, from which it follows
that \eqref{de2} holds. 
According to Lemma \ref{mkl} $ \boldsymbol{\alpha} \stackrel{\rm w}{\succeq}
\boldsymbol{\alpha^{*}} $
implies $ J(\boldsymbol{\alpha}, \lambda, x, \phi_1)\leq J(\boldsymbol{\alpha^{*}}, \lambda, x, \phi_1) $.
On the other hand, since $ \psi_2\circ\phi_1 $ is super-additive
by Lemma A.1. of \cite{lfr}, we have $ J(\boldsymbol{\alpha^{*}}, \lambda, x, \phi_1) \leq J(\boldsymbol{\alpha^{*}}, \lambda, x, \phi_2) $. So, it holds that
\[
J(\boldsymbol{\alpha}, \lambda, x, \phi_1)\leq J(\boldsymbol{\alpha^{*}}, \lambda, x, \phi_1) \leq J(\boldsymbol{\alpha^{*}}, \lambda, x, \phi_2).
\]
That is, $ X_{1:n}\leq_{\rm st} X^{*}_{1:n} $.
\end{proof}
Note that if in Theorem \ref{ede1}, we take $ \lambda =1 $, then we get the following result for the proportional reversed hazards (PRH) model.

\begin{corollary}
{\rm Suppose $ \boldsymbol{X} \sim PRH(\boldsymbol{\alpha}, \phi_{1} ) $ and $ \boldsymbol{X^{*}} \sim PRH(\boldsymbol{\alpha^{*}}, \phi_{2} )$  and $ \phi_2\circ\psi_1 $ is super-additive. Then $ \boldsymbol{\alpha} \stackrel{\rm w}{\succeq}
\boldsymbol{\alpha^{*}} $ implies $X_{1:n}\leq_{\rm st}X^{*}_{1:n}$.}
\end{corollary}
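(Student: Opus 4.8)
The plan is to derive this corollary as an immediate specialization of Theorem~\ref{ede1}, exploiting the fact that the PRH model coincides with the ES family once the scale parameter is fixed at $\lambda=1$. Recall that $X\sim {\rm ES}(\alpha,\lambda)$ means $X$ has distribution function $[G(\lambda x)]^{\alpha}$; putting $\lambda=1$ gives $[G(x)]^{\alpha}$, which is precisely the proportional reversed hazards model with proportionality parameter $\alpha$ and baseline $G$. Hence $\boldsymbol{X}\sim PRH(\boldsymbol{\alpha},\phi_1)$ is the same object as $\boldsymbol{X}\sim {\rm ES}(\boldsymbol{\alpha},1,\phi_1)$, and likewise for $\boldsymbol{X^{*}}$.

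First I would substitute $\lambda=1$ into the survival-function representation \eqref{for}, obtaining
\[
\bar{G}_{X_{1:n}}(x)=\phi\Big(\sum_{i=1}^{n}\psi\big(1-G^{\alpha_i}(x)\big)\Big)=J(\boldsymbol{\alpha},1,x,\phi),
\]
so that $X_{1:n}$ and $X^{*}_{1:n}$ carry survival functions $J(\boldsymbol{\alpha},1,x,\phi_1)$ and $J(\boldsymbol{\alpha^{*}},1,x,\phi_2)$ respectively. Next I would observe that every step of the proof of Theorem~\ref{ede1} carries over verbatim with $\lambda=1$: the partial-derivative computations establishing that $J$ is increasing and Schur-concave in the $\alpha_i$ involve only $G^{\alpha_i}(x)$, its logarithm, and $\phi_1$ together with its derivatives, none of which depends on the value of the scale parameter. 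Consequently, Lemma~\ref{mkl} still yields $J(\boldsymbol{\alpha},1,x,\phi_1)\le J(\boldsymbol{\alpha^{*}},1,x,\phi_1)$ under $\boldsymbol{\alpha}\stackrel{\rm w}{\succeq}\boldsymbol{\alpha^{*}}$, while the super-additivity condition on the relevant composite of the generators gives $J(\boldsymbol{\alpha^{*}},1,x,\phi_1)\le J(\boldsymbol{\alpha^{*}},1,x,\phi_2)$ via Lemma~A.1 of \cite{lfr}. Chaining these two inequalities produces $X_{1:n}\le_{\rm st}X^{*}_{1:n}$.

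The one point that genuinely requires care is the precise form of the super-additivity hypothesis, since the entire mathematical substance of the argument already resides in Theorem~\ref{ede1}. Setting $\lambda=1$ leaves the generators $\phi_1,\phi_2$ and their pseudo-inverses $\psi_1,\psi_2$ unchanged, so the condition inherited from Theorem~\ref{ede1} is that $\psi_2\circ\phi_1$ be super-additive; the statement as written lists $\phi_2\circ\psi_1$, which appears to be a transcription slip for $\psi_2\circ\phi_1$. I would therefore prove the corollary under exactly the same composite condition as in the theorem, after which the conclusion follows with no further computation.
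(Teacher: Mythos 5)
Your proposal matches the paper exactly: the paper derives this corollary simply by setting $\lambda=1$ in Theorem~\ref{ede1}, since ${\rm ES}(\boldsymbol{\alpha},1,\phi)$ is precisely the PRH model, and your verbatim-specialization argument is that same route. You were also right to flag the hypothesis $\phi_2\circ\psi_1$ in the corollary statement as a likely transcription slip for the theorem's condition $\psi_2\circ\phi_1$, a discrepancy the paper leaves unaddressed.
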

The following corollary immediately follows from the above theorem.
\begin{corollary}
{\rm Suppose $ \boldsymbol{X} \sim ENH(\alpha,\lambda,\boldsymbol{\beta}, \phi_{1} ) $ and $ \boldsymbol{X^{*}} \sim ENH(\alpha,\lambda,\boldsymbol{\beta^{*}}, \phi_{2} )$  and $ \phi_2\circ\psi_1 $ is super-additive. Then $ \boldsymbol{\beta} \stackrel{\rm w}{\succeq}
\boldsymbol{\beta^{*}} $ implies $X_{1:n}\leq_{\rm st}X^{*}_{1:n}$.}
\end{corollary}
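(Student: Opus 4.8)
The plan is to recognize the $\mathrm{ENH}$ family as a special case of the exponentiated scale ($\mathrm{ES}$) family already handled in Theorem \ref{ede1}, and then merely specialize; no new analysis is needed. Recall from \eqref{l1} that the cdf of an $\mathrm{ENH}(\alpha,\lambda,\beta)$ variable is
\[
G(x)=\big[\,1-\exp\{1-(1+\lambda x)^{\alpha}\}\,\big]^{\beta}=\big[F_{0}(x)\big]^{\beta},
\]
where $F_{0}(x)=1-\exp\{1-(1+\lambda x)^{\alpha}\}$ is the Nadarajah--Haghighi cdf with parameters $\alpha,\lambda$. Since in the present statement $\alpha$ and $\lambda$ are held common to both $\boldsymbol{X}$ and $\boldsymbol{X^{*}}$ and only the exponents $\beta_{i}$ (respectively $\beta_{i}^{*}$) vary, the baseline $F_{0}$ is one and the same for every margin of both samples. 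Hence each $\mathrm{ENH}(\alpha,\lambda,\beta_{i})$ margin is exactly a proportional reversed hazards margin with baseline $F_{0}$ and exponent $\beta_{i}$, i.e.\ an $\mathrm{ES}$ margin of unit scale and shape $\beta_{i}$.

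First I would make this dictionary precise: take the baseline $G$ in the definition of the $\mathrm{ES}$ family to be $F_{0}$, set the $\mathrm{ES}$ scale equal to $1$, and let the $\mathrm{ES}$ shape vector be $\boldsymbol{\beta}$ (resp.\ $\boldsymbol{\beta^{*}}$). Under this correspondence the vector $\boldsymbol{X}\sim \mathrm{ENH}(\alpha,\lambda,\boldsymbol{\beta},\phi_{1})$ coincides with an $\mathrm{ES}$ sample of scale $1$ and generator $\phi_{1}$, and likewise for $\boldsymbol{X^{*}}$. This is precisely the $\lambda=1$ instance of Theorem \ref{ede1} recorded in the preceding (PRH) corollary. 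I would then invoke that corollary with the shape vectors $\boldsymbol{\alpha},\boldsymbol{\alpha^{*}}$ of the theorem replaced throughout by $\boldsymbol{\beta},\boldsymbol{\beta^{*}}$: the assumed super-additivity of $\phi_{2}\circ\psi_{1}$ is verbatim the generator hypothesis of that corollary, and the weak supermajorization $\boldsymbol{\beta}\stackrel{\rm w}{\succeq}\boldsymbol{\beta^{*}}$ plays the role of $\boldsymbol{\alpha}\stackrel{\rm w}{\succeq}\boldsymbol{\alpha^{*}}$. Its conclusion is $X_{1:n}\leq_{\rm st}X^{*}_{1:n}$, as required.

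The point to stress is that there is no substantive analytic obstacle: the monotonicity and Schur-concavity of the survival function $J(\cdot,\lambda,x,\phi_{1})$, together with the super-additivity argument passing from $\phi_{1}$ to $\phi_{2}$, have all been carried out once and for all in the proof of Theorem \ref{ede1}. The only genuine task is the bookkeeping of parameter roles---one must verify that fixing $\alpha$ and $\lambda$ really does produce a common baseline $F_{0}$ across all margins of both samples (so the $\mathrm{ES}$ framework applies word for word) and that it is the exponent $\beta$, not the inner shape $\alpha$, that is being majorized. Once this identification is spelled out, the corollary is immediate.
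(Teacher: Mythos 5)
Your proposal is correct and follows essentially the same route as the paper, which simply notes that the corollary ``immediately follows'' from Theorem \ref{ede1}: your identification of each $\mathrm{ENH}(\alpha,\lambda,\beta_i)$ margin as an exponentiated-scale (PRH) margin with the common NH baseline $F_0(x)=1-\exp\{1-(1+\lambda x)^{\alpha}\}$ and exponent $\beta_i$, followed by specializing the theorem (equivalently its PRH corollary) to the shape vectors $\boldsymbol{\beta},\boldsymbol{\beta^{*}}$, is precisely the bookkeeping the paper leaves implicit. The only wrinkle --- the corollary's hypothesis reads $\phi_2\circ\psi_1$ super-additive while Theorem \ref{ede1} assumes $\psi_2\circ\phi_1$ super-additive --- is an inconsistency internal to the paper's own statements (stemming from the generator/inverse notation in the cited lemma of Li and Fang), not a gap in your argument, which matches the corollary as stated.
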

\section{Conclusions}
The failure rate function of the ENH model can be
constant, decreasing, increasing, upside-down bathtub (unimodal) and bathtub-shaped. Due to the great flexibility of the
failure rate function of this model, it thus provides a good alternative to many existing life distributions in modeling
positive real data sets in practice. In particular, it can be an interesting alternative to the well-known three-parameter
EW distribution. In this paper, we discussed stochastic comparisons of parallel  systems with independent  heterogeneous ENH
components in terms of the usual stochastic order,
dispersive order, convex transform order and the likelihood ratio order. In the presence of the Archimedean copula, we studied stochastic comparison of series dependent systems in terms of the usual stochastic order.




\begin{thebibliography}{}

\bibitem{bee} Balakrishnan, N., Haidari, A. and Masoumifard, K., 2015. Stochastic comparisons of series and parallel systems with generalized exponential components. IEEE Trans. Reliab.  64, 333-348.

\bibitem{bz}
Balakrishnan, N., Zhao, P., 2013. Ordering properties of order statistics from heterogeneous populations: a review with an exphasis on some recent developments. Probab. Engrg. Inform. Sci. 27, 403-443.

\bibitem{btr} Bashkar, E., Torabi, H. and Roozegar, R., 2017. Stochastic comparisons of extreme order statistics in the
heterogeneous exponentiated scale model. J. stat. theory appl. (Accepted).

\bibitem{fb15} Fang, L. and Balakrishnan, N., 2016. Likelihood ratio order of parallel systems with heterogeneous Weibull components. Metrika 79, 693-703.

\bibitem{flld} Fang, R., Li, C. and Li, X., 2015. Stochastic comparisons on sample extremes of dependent and heterogenous observations. Statistics, 1-26.

\bibitem{fz13} Fang, L. and Zhang, X., 2013. Stochastic comparisons of series systems with heterogeneous Weibull components. Statist. Probab. Lett. 83, 1649-1653.
%
\bibitem{fz15} Fang, L. and Zhang, X., 2015. Stochastic comparisons of parallel systems with exponentiated Weibull components. Statist. Probab. Lett. 97, 25-31.
%
\bibitem{gup} Gupta, N., Patra, L.K. and Kumar, S., 2015. Stochastic comparisons in systems with Fr$\grave{\rm e}$chet distributed components. operat. res. lett. 43, 612-615.
%
\bibitem{gk} Gupta, R.D., Kundu, D., 1999. Generalized exponential distributions. Aust. Nz. J. Statist. 41, 173-188.

\bibitem[Khaledi and Kochar (2006)]{k} Khaledi, B.E., Kochar, S.C., 2006. Weibull distribution: some stochastic comparisons results. J. Statist. Plann. Inference 136, 3121-3129.

\bibitem{kc16} Kundu, A. and Chowdhury, S., 2016. Ordering properties of order statistics from heterogeneous exponentiated Weibull models. Statist. Probab. Lett. 114, 119-127.
%

\bibitem{lem} Lemonte, A. J., 2013. A new exponential-type distribution with constant, decreasing, increasing, upside-down bathtub and bathtub-shaped failure rate function. Comput. statist. data anal. 62, 149-170.
\bibitem{lfr}
Li, X. and Fang,  R., 2015. Ordering properties of order statistics from random variables of Archimedean copulas with applications, J. Multivariate Anal. 133 304-320.
%
\bibitem[Li and Li (2015)]{ll} Li, C. and  Li, X., 2015. Likelihood ratio order of sample minimum from heterogeneous Weibull random variables. Statist. Probab. Lett. 97, 46-53.
%
\bibitem[Li and Li (2013)]{lll} Li, H. and Li, X., 2013. Stochastic Orders in Reliability and Risk. Springer, New York.
%
\bibitem[Marshall et al., 2011]{met} Marshall, A.W., Olkin, I. and Arnold, B.C., 2011. Inequalities: Theory of Majorization and its Applications. Springer, New York.
%
\bibitem{ncn} McNeil,  A. J. and Ne$ \breve{s} $lehov$ \acute{a} $, J. (2009). Multivariate Archimedean Copulas, d-Monotone Functions and $ \ell_{1} $-Norm Symmetric Distributions. Ann. Statist. 3059-3097.

\bibitem{ms}Mudholkar, G.S., Srivastava, D.K., 1993. Exponentiated Weibull family for analyzing bathtub failure-rate data. IEEE Trans. Reliab. 42, 299–302.

\bibitem{nh} Nadarajah, S., Haghighi, F., 2011. An extension of the exponential distribution. Statistics 45, 543-558.

\bibitem{nels} Nelsen, R.B., 2006. An introduction to copulas. Springer, New York.

\bibitem[Shaked and Shanthikumar (2007)]{ss} Shaked, M. and Shanthikumar, J.G., 2007. Stochastic Orders. Springer, New York.

\bibitem{to15} Torrado, N., 2015. Comparisons of smallest order statistics from Weibull distributions with different scale and shape parameters. J. Korean Stat. Soc. 44, 68-76.

\bibitem{tk15} Torrado, N. and Kochar, S.C., 2015. Stochastic order relations among parallel systems from Weibull distributions. J. Appl. Probab. 52, 102-1116.



\end{thebibliography}


\section*{References}

\end{document}